\newtheorem{theorem}{Theorem}[section]
\newtheorem{lemma}[theorem]{Lemma}
\newtheorem{proposition}[theorem]{Proposition}
\theoremstyle{definition}
\newtheorem{definition}[theorem]{Definition}
\numberwithin{equation}{section}
\begin{document}

\baselineskip=15pt

\title[Semidirect products and invariant connections]{Semidirect products and
invariant connections}

\author[I. Biswas]{Indranil Biswas}

\address{School of Mathematics, Tata Institute of Fundamental
Research, Homi Bhabha Road, Bombay 400005, India}

\email{indranil@math.tifr.res.in}

\subjclass[2000]{53B35, 32L05}

\keywords{Semidirect product, holomorphic hermitian bundle, invariant connection,
parabolic subgroup}

\thanks{Supported by a J. C. Bose fellowship.}

\date{}

\begin{abstract}
Let $S$ be a complex reductive group acting holomorphically on a complex Lie group
$N$ via holomorphic
automorphisms. Let $K(S)\,\subset\, S$ be a maximal compact subgroup. The
semidirect product $G\, :=\, N\rtimes K(S)$ acts on $N$ via biholomorphisms. We give 
an explicit description of the isomorphism classes of $G$--equivariant almost
holomorphic hermitian principal bundles on $N$. Under the assumption that there is
a central subgroup $Z\,=\, \text{U}(1)$ of $K(S)$ that acts on $\text{Lie}(N)$ as
multiplication through a single nontrivial character, we give an
explicit description of the isomorphism classes of $G$--equivariant
holomorphic hermitian principal bundles on $N$.
\end{abstract}

\maketitle

\section{Introduction}\label{sec1}

Let $N$ and $S$ be connected complex Lie groups with $S$ acting holomorphically on
$N$ via automorphisms. The semidirect product $N\rtimes S$ acts
holomorphically on the complex manifold $N$. Our starting point is the
observation that the holomorphic principal $S$--bundle
$$
N\rtimes S\,\longrightarrow\, (N\rtimes S)/S\,=\, N
$$
has a tautological flat holomorphic connection.

Assuming that $S$ is reductive, fix a maximal compact subgroup $K(S)$ of it, and define
$$
G\, :=\, N\rtimes K(S)\, \subset\, N\rtimes S\, .
$$

Let $H$ be a complex connected reductive group and $K$ a maximal compact subgroup of it.
A hermitian structure on a principal $H$--bundle $E_H$ over $N$ is a $C^\infty$ reduction
of structure group of $E_H$ to $K$.

Our aim here is to study the $G$--equivariant (almost) holomorphic hermitian principal
$H$--bundles on $N$.

Take a homomorphism $\beta\, :\, K(S)\,\longrightarrow\, K$.
The action of $K(S)$ on $N$ produces an action of
$K(S)$ on the Lie algebra ${\mathfrak n}$ of $N$.
Using $\beta$, the adjoint action of $K$ on its Lie algebra $\mathfrak k$
produces an action of $K(S)$ on $\mathfrak k$. Combining these, we get an
action of $K(S)$ on
$$
{\mathcal W}\, :=\, \text{Hom}_{\mathbb R}({\mathfrak n}\, ,{\mathfrak k})\, .
$$

Consider all pairs $(\beta\, ,\omega)$, where
\begin{itemize}
\item $\beta\, :\, K(S)\,\longrightarrow\, K$ is a homomorphism, and

\item $\omega\,\in\, {\mathcal W}^{K(S)}\,\subset\, \mathcal W$ is an invariant.
\end{itemize}
Two such pairs $(\beta\, ,\omega)$ and $(\beta'\, ,\omega')$
are called equivalent if there is an element $k\,\in\, K$ such that
\begin{itemize}
\item $\beta'(g)\,=\, k\beta(g)k^{-1}$ for all $g\,\in\, K(S)$, and

\item $\omega'(v)\, =\, \text{Ad}(k)((\omega)(v))$ for all $v\,\in\,
\mathfrak n$.
\end{itemize}
Let $\mathcal C$ denote the set of equivalence classes of all such pairs.

We prove the following (see Lemma \ref{lem2}):

\begin{lemma}\label{lei}
The set of isomorphism classes of equivariant almost holomorphic hermitian
principal $H$--bundle over $N$ is in bijection with $\mathcal C$.
\end{lemma}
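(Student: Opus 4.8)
The plan is to exploit the transitivity of the $G$--action on $N$. First I would observe that the subgroup $N\subset G$ already acts transitively on $N$ by left translations, and that the stabilizer in $G$ of the identity element $e\in N$ consists of those $(n,k)$ with $n\cdot k(e)\,=\,e$; since every $k\in K(S)$ fixes $e$ (automorphisms preserve the identity), this stabilizer is precisely $K(S)$. Hence $N\,=\,G/K(S)$ as a $G$--space. Writing $\mathfrak g\,=\,\mathfrak n\oplus\mathfrak k(S)$ for the Lie algebra of $G$, where $\mathfrak k(S)\,=\,\text{Lie}(K(S))$, the subalgebra $\mathfrak n$ is an $\text{Ad}(K(S))$--invariant complement to $\mathfrak k(S)$ (indeed $\mathfrak n$ is an ideal), so $G/K(S)$ is a reductive homogeneous space with isotropy module $\mathfrak n$, canonically identified with $T_eN$.

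The next step is to split the datum ``$G$--equivariant almost holomorphic hermitian $H$--bundle'' into two pieces. A hermitian structure on $E_H$ is by definition a reduction $E_K\subset E_H$ of structure group to $K$, while an almost holomorphic structure is a Dolbeault (that is, $(0,1)$--type) operator on $E_H$; given the metric, such an operator is the same thing as a unitary connection on $E_K$, because the $(0,1)$--part of a unitary connection determines the connection and, conversely, every Dolbeault operator is the $(0,1)$--part of a unique compatible connection. A $G$--equivariant such structure therefore amounts to a $G$--equivariant principal $K$--bundle $E_K\,\longrightarrow\,N$ together with a $G$--invariant connection on it, and isomorphism of equivariant almost holomorphic hermitian bundles corresponds to $G$--equivariant isomorphism of these pairs. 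So it suffices to classify pairs (equivariant $K$--bundle, invariant connection).

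For the classification I would invoke the homogeneous--space description. A $G$--equivariant principal $K$--bundle over $N\,=\,G/K(S)$ is determined up to isomorphism by its isotropy homomorphism: evaluating the $K(S)$--action on a chosen point $p_0$ of the fiber over $e$ gives $\beta\,:\,K(S)\,\longrightarrow\,K$, and $E_K\,\cong\,G\times_\beta K$. Replacing $p_0$ by $p_0\cdot k$ with $k\in K$ conjugates $\beta$ by $k$, which accounts exactly for the first clause of the equivalence relation defining $\mathcal C$. For fixed $\beta$, Wang's classical theorem on invariant connections over a reductive homogeneous space identifies the $G$--invariant connections on $G\times_\beta K$ with the real--linear maps $\omega\,:\,\mathfrak n\,\longrightarrow\,\mathfrak k$ satisfying $\omega(\text{Ad}(g)v)\,=\,\text{Ad}(\beta(g))(\omega(v))$ for all $g\in K(S)$ and $v\in\mathfrak n$, that is, with the invariants $\mathcal W^{K(S)}$; and the base--point change by $k$ sends $\omega$ to $\text{Ad}(k)\circ\omega$, yielding the second clause. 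Assembling these, the assignment $(\beta,\omega)\,\longmapsto\,(E_K^\beta,\ \text{connection}_\omega)\,\longmapsto\,(E_H\,=\,E_K^\beta\times_K H,\ (0,1)\text{--part})$ gives the desired bijection with $\mathcal C$.

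The main obstacle I anticipate is the precise bookkeeping in the second step: verifying that the passage between an almost holomorphic structure on $E_H$ and a unitary connection on the reduction $E_K$ is a genuine equivalence respecting $G$--invariance, and in particular that no integrability or curvature condition is imposed. It is precisely this absence of a type--$(1,1)$ constraint that distinguishes the ``almost holomorphic'' case from the genuinely holomorphic one treated later (where the extra hypothesis on the central $\text{U}(1)$ enters), and that allows \emph{every} $\omega\in\mathcal W^{K(S)}$ to occur. By contrast, the verification that the two clauses in the definition of $\mathcal C$ arise exactly from the two sources of ambiguity, namely the right $K$--action coming from the choice of base point in the fiber over $e$, should be routine once the homogeneous--space framework and Wang's correspondence are set up.
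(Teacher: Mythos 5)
Your proposal is correct and follows essentially the same route as the paper: the reduction to pairs (equivariant hermitian $K$--bundle, invariant connection) via the unique compatible connection is the paper's Lemma \ref{lem1} (citing Atiyah), and your appeal to the isotropy homomorphism plus Wang's theorem on the reductive homogeneous space $G/K(S)$ is exactly what the paper proves by hand in Proposition \ref{prop1}, where the canonical (flat) connection $\nabla^{K(S)}$ plays the role of the base point and $\omega\in\mathcal W^{K(S)}$ the Wang datum. The only difference is that you cite Wang's correspondence where the paper reconstructs it explicitly via $\nabla'+\widetilde\omega$ and the inverse map $\omega=d'-\delta$.
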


The proof of Lemma \ref{lei} uses the tautological flat connection mentioned at the
beginning. The bijection in Lemma \ref{lei} is very explicit. It is described in the
proof of Theorem \ref{thm1}. A related result is proved in \cite{BT}.

Now assume that there is a central subgroup $Z\,=\, \text{U}(1)$ of $K(S)$ that acts
on the Lie algebra $\mathfrak n$ as multiplication through a single nontrivial
character of $Z$.

Since ${\mathfrak h}\,=\, {\mathfrak k}\otimes {\mathbb C}$ (recall that $H$
is reductive), we have $\mathcal W\,=\, \text{Hom}_{\mathbb C}(\overline{\mathfrak n}\, ,
{\mathfrak h})$, where $\overline{\mathfrak n}$ is the conjugate of
$\mathfrak n$. Any $\mathbb C$--linear map
$$
\alpha\, :\, \overline{\mathfrak n}\,\longrightarrow\,\mathfrak h
$$
produces a linear map $\bigwedge^2 \alpha\, :\, \bigwedge^2 \overline{\mathfrak n}\,
\longrightarrow\,\bigwedge^2 \mathfrak h$. Composing $\bigwedge^2 \alpha$ with the
Lie bracket $\bigwedge^2 \mathfrak h\, \longrightarrow\, \mathfrak h$, we get
$$
\varphi(\alpha)\, :\, \bigwedge\nolimits^2 \overline{\mathfrak n}\,\longrightarrow\,
\mathfrak h\, .
$$
Define
$$
{\mathcal C}_0\, :=\, \{(\beta\, ,\omega)\,\in\, {\mathcal C}\,\mid\,
\varphi(\omega)\,=\, 0\}\,\subset\, \mathcal C\, .
$$

We prove the following (see Theorem \ref{thm1}):

\begin{theorem}\label{thm0}
The set of isomorphism classes of equivariant holomorphic hermitian principal
$H$--bundle over $N$ is in bijection with ${\mathcal C}_0$.
\end{theorem}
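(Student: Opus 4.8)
The plan is to obtain the holomorphic bundles as exactly those almost holomorphic ones whose almost complex structure on the total space is integrable, and then to translate this integrability into the algebraic condition $\varphi(\omega)=0$. By Lemma \ref{lei} the $G$--equivariant almost holomorphic hermitian principal $H$--bundles on $N$ are parametrized by $\mathcal{C}$, and among these the genuinely holomorphic ones are precisely those whose almost complex structure is integrable. Since the defining condition of $\mathcal{C}_0$ constrains only $\omega$ and leaves $\beta$ free, it suffices to show that, for the bundle associated to a pair $(\beta\, ,\omega)$, integrability holds if and only if $\varphi(\omega)=0$; this identifies the holomorphic locus inside $\mathcal{C}$ with $\mathcal{C}_0$.

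First I would record the consequence of the standing hypothesis on $Z=\text{U}(1)$. As $Z$ is central in $K(S)$ and acts on $\mathfrak n$ through a single nontrivial character $\chi$, the Lie bracket $\mathfrak n\wedge\mathfrak n\to\mathfrak n$, being $K(S)$--equivariant, would carry weight $\chi^{2}$ on the source to weight $\chi$ on the target; since $\chi$ is nontrivial this forces $[\mathfrak n,\mathfrak n]=0$. Hence $\mathfrak n$ is abelian, and one checks that the $Z$--scaling cannot preserve any nonzero lattice, so $N$ is a complex vector group, biholomorphic to $\mathbb{C}^{\dim_{\mathbb{C}}\mathfrak n}$ with its flat translation--invariant complex structure. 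This flatness of the base is what will annihilate the derivative terms in the integrability equation.

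Next I would make the almost complex structure explicit. Using the tautological flat holomorphic connection on $N\rtimes S\to N$ as a reference and the hermitian reduction fixed by $\beta$, the $C^\infty$ bundle underlying the associated bundle is trivial (as $N$ is contractible), and its almost complex structure differs from the reference by the invariant $(0,1)$--form with values in the adjoint bundle determined by $\omega\,\in\,\mathcal{W}^{K(S)}\,=\,\mathrm{Hom}_{\mathbb{C}}(\overline{\mathfrak n}\, ,\mathfrak h)^{K(S)}$. Integrability of this almost complex structure is equivalent to the vanishing of the $(0,2)$--part of the curvature of the corresponding connection, which has the shape $\overline{\partial}\omega+\tfrac12[\omega\, ,\omega]$. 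Because $\omega$ is translation--invariant and the relevant bracket on $\overline{\mathfrak n}$ vanishes (the base being flat, with $\overline{\mathfrak n}$ abelian), the term $\overline{\partial}\omega$ is zero, and the $(0,2)$--curvature reduces to the quadratic term $v\wedge w\,\mapsto\,[\omega(v)\, ,\omega(w)]$, which is precisely $\varphi(\omega)$. Therefore integrability holds exactly when $\varphi(\omega)=0$.

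Finally I would verify that the condition descends to equivalence classes, so that $\mathcal{C}_0$ is well defined and the bijection of Lemma \ref{lei} restricts correctly. If $(\beta\, ,\omega)$ and $(\beta'\, ,\omega')$ are equivalent via $k\,\in\,K$, then $\omega'=\mathrm{Ad}(k)\circ\omega$; since $\mathrm{Ad}(k)$ is a Lie algebra automorphism of $\mathfrak h$ it commutes with the bracket, whence $\varphi(\omega')=\mathrm{Ad}(k)\circ\varphi(\omega)$ and so $\varphi(\omega)=0$ if and only if $\varphi(\omega')=0$. Combined with the previous step this identifies the holomorphic isomorphism classes with $\mathcal{C}_0$, giving the asserted bijection. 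The main obstacle I anticipate is the third step: pinning down the precise normalization of the almost complex structure in terms of $\omega$ coming from the construction behind Lemma \ref{lei}, and checking carefully that invariance of $\omega$ together with abelianness of $\mathfrak n$ kills every term of the $(0,2)$--curvature except the bracket term $\varphi(\omega)$.
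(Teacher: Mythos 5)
Your proposal is correct and follows the same overall architecture as the paper (reduce to Lemma \ref{lei}, then show that integrability of the canonical invariant connection is equivalent to $\varphi(\omega)=0$, the quadratic term being identified with $\varphi(\omega)$ and the equivalence relation respecting the condition via $\varphi(\mathrm{Ad}(k)\circ\omega)=\mathrm{Ad}(k)\circ\varphi(\omega)$). The one substantive step --- killing the derivative term $\widehat{\nabla}(\widetilde{\omega})^{0,2}$ in the $(0,2)$--curvature of $\nabla^H+\widetilde{\omega}$ --- is where you diverge from the paper, and your route is genuinely different. The paper decomposes $\mathrm{ad}(E_H)$ into $Z$--isotypical subbundles $\mathcal{V}^\chi$, notes that $\widetilde{\omega}$ is a section of $\mathcal{V}^{\chi_0^{-1}}\otimes\Omega^{0,1}_N$, and observes that $\widehat{\nabla}(\widetilde{\omega})^{0,2}(e_N)$ lies in $(V^{\chi_0^{-1}}\otimes\Omega^{0,2}_{N,e_N})^Z$, on which $Z$ acts through $\chi_0^{-1}\chi_0^2=\chi_0\neq 1$, so the invariants vanish; $G$--invariance then propagates the vanishing from $e_N$ to all of $N$. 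You instead first extract the structural consequence that the paper never states: since $Z$ acts on $\mathfrak{n}$ by Lie algebra automorphisms through the single nontrivial character $\chi_0$, the identity $\chi_0(g)^2[v,w]=\chi_0(g)[v,w]$ forces $[\mathfrak{n},\mathfrak{n}]=0$, and the scaling action rules out a nonzero lattice, so $N\cong\mathbb{C}^{\dim\mathfrak{n}}$; then $\widehat{\nabla}\widetilde{\omega}=0$ follows from the Chevalley--Eilenberg formula $d\alpha(X,Y)=-\alpha([X,Y])$ for the left--invariant form $\widetilde{\omega}$ in the flat (translation--invariant) trivialization of $\mathrm{ad}(E_H)$. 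Both arguments are valid under the stated hypothesis; yours is more elementary, makes transparent why the examples section of the paper restricts to parabolics with abelian unipotent radical, and in fact establishes the theorem under the a priori weaker hypothesis that $\mathfrak{n}$ is abelian and $\widetilde{\omega}$ is translation--invariant, while the paper's weight count stays entirely inside representation theory at the single fiber over $e_N$. The only point you should nail down in a full write--up is the one you already flag: that the representative of $(\beta,\omega)$ produced by Lemma \ref{lei} really is the flat connection $\nabla^H$ shifted by the invariant $(0,1)$--form $\widetilde{\omega}$ with $\widetilde{\omega}(e_N)=\omega$, so that the $(0,2)$--curvature is exactly $\widehat{\nabla}(\widetilde{\omega})^{0,2}+(\widetilde{\omega}\wedge\widetilde{\omega})^{0,2}$ with no extra cross terms.
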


\section{Semidirect products and a tautological connection}\label{se2.1}

\subsection{Action of a semidirect product}

Let $N$ be a connected complex Lie group. The group of all holomorphic
automorphisms of the group $N$ that are
connected to the identity map of $N$ will be denoted by $\text{Aut}(N)$.
In other words, $\text{Aut}(N)$ is the connected component, containing the identity
element, of the group of holomorphic automorphisms of $N$. So $\text{Aut}(N)$
is a connected complex Lie group. Let $S$ be a connected complex
affine algebraic group and
\begin{equation}\label{eta}
\eta\, :\, S\,\longrightarrow\, \text{Aut}(N)
\end{equation}
a holomorphic homomorphism of Lie groups. Let $N\rtimes S$ be the
corresponding semidirect product. The underlying set for $N\rtimes S$
is $N\times S$, and the group structure on it is defined by the rule
$$
(u_1\, ,g_1)\cdot (u_2\, ,g_2)\,=\, (u_1\eta(g_1)(u_2)\, ,g_1g_2)\, .
$$

The complex Lie group $N\rtimes S$ acts on the complex manifold $N$ as follows:
\begin{equation}\label{e1}
(u\, ,g)(v)\,=\, u\eta (g)(v)\, ,
\end{equation}
where $(u\, ,g)\,\in\, N\times S$ and $v\,\in\, N$. This action is clearly holomorphic;
it does not preserve the group structure of $N$.

\subsection{A connection}\label{sec.conn}

Consider the complex subgroup $S\, \subset\, N\rtimes S$ given by the subset
$\{e_N\}\times S\, \subset\, N\times S$, where $e_N$ is the identity element
of $N$. The projection
\begin{equation}\label{pb}
N\rtimes S\,\longrightarrow\, (N\rtimes S)/S
\end{equation}
is a holomorphic principal $S$--bundle. Since
$$
(u\, ,g)(e_N\, , g_1)\,=\, (u\, ,gg_1)\, ,
$$
for all $u\,\in\, N$ and $g\, ,g_1\,\in\, S$, the projection
$N\rtimes S\,\longrightarrow\, N$ defined by $(u\, ,g)\,\longmapsto\, u$
factors through the quotient $(N\rtimes S)/S$. The resulting map
$$
(N\rtimes S)/S\,\longrightarrow\, N
$$
is a biholomorphism. We will show that the holomorphic principal $S$--bundle
in \eqref{pb} has a natural flat holomorphic connection.

Let $\mathfrak n$ and $\mathfrak p$ be the Lie algebras of $N$ and $N\rtimes S$
respectively. Since $N$ is a normal subgroup of $N\rtimes S$, we conclude that
$\mathfrak n$ is an ideal of $\mathfrak p$. The holomorphic tangent bundle
of $N\rtimes S$ will be denoted by $T^{1,0}(N\rtimes S)$. Let
\begin{equation}\label{cH}
{\mathcal H}\,\subset\, T^{1,0}(N\rtimes S)
\end{equation}
be the holomorphic subbundle obtained by translating the above mentioned subspace
$\mathfrak n\, \subset\, \mathfrak p$ using the left--translation action of
$N\rtimes S$ on itself. Since $\mathfrak n$ is an ideal in $\mathfrak p$, the
right--translation action of $S$ on $N\rtimes S$ preserves this subbundle $\mathcal H$.
It can be shown that $\mathcal H$ is a direct summand of the
holomorphic subbundle of $T^{1,0}(N\rtimes S)$
given by the orbits of the right--translation action of $S$ on $N\rtimes S$. Indeed, the two
subbundles of $T^{1,0}(N\rtimes S)$ are clearly transversal at the identity
element. Since both the subbundles are preserved by the
left--translation action of $N\rtimes S$ on itself, they are transversal everywhere.

Since $\mathcal H$ is preserved by the right--translation action of $S$ on $N\rtimes S$,
and it is a direct summand of the holomorphic subbundle of $T^{1,0}(N\rtimes S)$
given by the orbits of the right--translation action of $S$, there is a
connection on the principal $S$--bundle in \eqref{pb} whose horizontal distribution
coincides with $\mathcal H$. Clearly, this condition uniquely determines the connection.
The connection on the principal $S$--bundle in \eqref{pb}
constructed this way will be denoted by $\nabla^S$.

Since $\mathfrak n$ is closed under the Lie bracket operation on $\mathfrak p$, the
distribution $\mathcal H$ in \eqref{cH} is integrable. Therefore, the above connection
$\nabla^S$ is flat. The connection $\nabla^S$ is holomorphic because the distribution
$\mathcal H\,\subset\, T^{1,0}(N\rtimes S)$ is holomorphic.

\section{Equivariant holomorphic hermitian bundles}\label{se3.1}

Henceforth, we assume that the group $S$ is reductive complex linear algebraic group.

Any two maximal compact subgroups of $S$ are conjugate by an element
of $S$ (see \cite[p. 256, Theorem 2.2(ii)]{He}). Fix a maximal compact subgroup
$$
K(S)\, \subset\, S\, .
$$
Define the subgroup
\begin{equation}\label{h1}
G\, :=\, N\rtimes K(S)\, \subset\, N\rtimes S\, .
\end{equation}
In other words, $G$ is the subset $N\times K(S)$ of $N\times S$ which is in fact
a Lie subgroup of $N\rtimes S$. The group $G$
acts on $N$ using the rule given in \eqref{e1}.

The group of biholomorphisms
of the complex manifold $N$ will be denoted by $\text{Hol}(N)$. Let
\begin{equation}\label{t}
\tau\, :\, G\,\longrightarrow\, \text{Hol}(N)
\end{equation}
be the homomorphism defined by the above action.

\subsection{Equivariant hermitian principal bundles}

Let $H$ be a connected reductive linear algebraic group defined over
$\mathbb C$. Fix a maximal compact subgroup
\begin{equation}\label{e2}
K\, \subset\, H\, .
\end{equation}

\begin{definition}\label{def-1}
A \textit{hermitian structure} on a $C^\infty$ principal $H$--bundle $E_H$ over $N$ is a
$C^\infty$ reduction of structure group
$$
E_K\, \subset\, E_H
$$
to the subgroup $K$ in \eqref{e2}.
\end{definition}

Let $\mathfrak h$ be the Lie algebra of $H$.
Let $E_H$ be a $C^\infty$ principal $H$--bundle on $N$. It's adjoint vector bundle
$E_H\times^H\mathfrak h$ will be denoted by $\text{ad}(E_H)$.

Consider the Hodge type decomposition $(T^*N)\otimes_{\mathbb R}\mathbb C\,=\,
\Omega^{1,0}_N\oplus \Omega^{0,1}_N$.
The space of all connections on the principal $H$--bundle $E_H$ is an affine
space for the vector space $C^\infty(N;\, \text{ad}(E_H)\otimes
(\Omega^{1,0}_N\oplus \Omega^{0,1}_N))$. Two connections $\nabla_1$ and $\nabla_2$ on the
principal $H$--bundle $E_H$ are called \textit{equivalent} if
$$
\nabla_1-\nabla_2\,\in\, C^\infty(N;\, \text{ad}(E_H)\otimes\Omega^{1,0}_N)
\,\subset\, C^\infty(N;\, \text{ad}(E_H)\otimes
(\Omega^{1,0}_N\oplus \Omega^{0,1}_N))\, .
$$
An \textit{almost holomorphic} structure on $E_H$ is an equivalence class of connections
on $E_H$ \cite[p. 87, Proposition 2]{Ko}.

The curvature of a connection $\nabla$ on $E_H$ will be denoted by ${\mathcal K}(\nabla)$.
The component of ${\mathcal K}(\nabla)$ of Hodge type $(0\, ,2)$ will be denoted by
${\mathcal K}(\nabla)^{0,2}$. If two connections $\nabla_1$ and $\nabla_2$
on $E_H$ are equivalent,
then clearly we have ${\mathcal K}(\nabla_1)^{0,2} \,=\, {\mathcal K}(\nabla_2)^{0,2}$. The
almost holomorphic structure on $E_H$ defined by a connection $\nabla$ on $E_H$ is
integrable if and only if
\begin{equation}\label{ic}
{\mathcal K}(\nabla)^{0,2} \,=\,0
\end{equation}
(see \cite[p. 87, Proposition 3]{Ko}). An integrable almost holomorphic
structure on $E_H$ is a holomorphic structure on the principal $H$--bundle $E_H$.

An \textit{almost holomorphic hermitian principal} $H$--bundle over $N$
is a triple $(E_H\, ,\nabla\, ,E_K)$, where $(E_H\, ,\nabla)$ is an almost holomorphic
principal $H$--bundle over $N$, and $E_K\, \subset\, E_H$ is a hermitian
structure on $E_H$.

We will often suppress the notation for the equivalence class of
connections; so when we say that $E_H$ is an almost holomorphic principal $H$--bundle
we mean that $E_H$ is equipped with an equivalence class of connections.

An \textit{isomorphism} from an almost holomorphic hermitian principal $H$--bundle
$(E_H\, ,E_K)$ to an almost holomorphic hermitian principal $H$--bundle $(E'_H\, ,
E'_K)$ is a $C^\infty$ isomorphism of principal $H$--bundles
$$
\begin{matrix}
E_H & \stackrel{f_0}{\longrightarrow} & E'_H\\
\Big\downarrow && \Big\downarrow \\
N & \stackrel{\rm Id}{\longrightarrow} & N
\end{matrix}
$$
such that
\begin{itemize}
\item $f_0$ takes the almost holomorphic structure on $E_H$ to that of
$E'_H$, and

\item $f_0(E_K)\,=\, E'_K$.
\end{itemize}

An almost holomorphic hermitian principal $H$--bundle over $N$ whose almost
complex structure is integrable is called a \textit{holomorphic hermitian
principal} $H$--bundle. An \textit{isomorphism} between two holomorphic hermitian
principal $H$--bundles is an isomorphisms of the underlying
almost holomorphic hermitian principal $H$--bundles.

Now we consider the action of $G$ on $N$ (see \eqref{h1} and \eqref{e1}).

\begin{definition}\label{def1}
An \textit{equivariant} hermitian principal $H$--bundle
over $N$ is a triple of the form $(E_H\, ,E_K\, ;\rho)$, where
$f\, :\, E_H\, \longrightarrow\, N$ is a $C^\infty$ principal
$H$--bundle, $E_K\, \subset\, E_H$ is a hermitian structure, and
$$
\rho\, :\, G\times E_H\, \longrightarrow\, E_H
$$
is a $C^\infty$ left--action of the group $G$ on $E_H$, satisfying the
following conditions:
\begin{enumerate}
\item $f\circ \rho (g\, ,z)\, =\, \tau(g)(f(z))$ for all $z\,\in\, E_H$ and
$g\, \in\, G$, where $\tau$ is defined in \eqref{t},

\item the actions of $G$ and $H$ on $E_H$ commute, and

\item $\rho(G\times E_K)\,=\, E_K$.
\end{enumerate}
\end{definition}

The first two of the above three conditions mean that $z\, \longmapsto\, \rho (g\, ,z)$
is a $C^\infty$ isomorphism of the principal $H$--bundle $E_H$ with the pulled
back principal $H$--bundle $\tau(g^{-1})^*E_H$. The last condition implies that
this isomorphism between $E_H$ and $\tau(g^{-1})^*E_H$ takes $E_K$ to
$\tau(g^{-1})^*E_K$.

An \textit{isomorphism} between two equivariant hermitian principal $H$--bundles
$(E_H\, ,E_K\, ;\rho)$ and $(E'_H\, ,E'_K\, ;\rho')$ over $N$ is
a $C^\infty$ isomorphism of principal $H$--bundles
$$
f_0\, :\, E_H\, \longrightarrow\, E'_H
$$
such that $f_0(E_K)\,=\, E'_K$, and $f_0(\rho(g\, , z))\,=\, \rho'(g\, , f_0(z))$ for
all $g\,\in\, G$ and $z\,\in\, E_H$.

Let $(E_H\, ,E_K\, ;\rho)$ be an equivariant hermitian principal $H$--bundle
over $N$. A connection $\nabla$ on the principal $K$--bundle $E_K$ is called
{\it invariant} if the action of $G$ on $E_K$ given by $\rho$ preserves $\nabla$.
In other words, $\nabla$ is invariant if and only if for every $g\, \in\, G$,
the isomorphism of $E_K$ with $\tau(g^{-1})^*E_K$ defined by
$z\, \longmapsto\, \rho (g\, ,z)$ takes $\nabla$ to the pulled back connection
$\tau(g^{-1})^*\nabla$ on $\tau(g^{-1})^*E_K$.

\begin{definition}\label{def-2}
An \textit{equivariant} almost holomorphic hermitian principal $H$--bundle over $N$
is an equivariant hermitian principal $H$--bundle $(E_H\, ,E_K\, ;\rho)$ such that $E_H$
is equipped with an almost holomorphic structure satisfying the following condition: for
each $g\, \in\, G$, the diffeomorphism of $E_H$ defined by
$z\, \longmapsto\, \rho (g\, ,z)$ preserves the almost complex structure on $E_H$.
\end{definition}

An \textit{isomorphism} between two equivariant almost
holomorphic hermitian principal $H$--bundles
$(E_H\, ,E_K\, ;\rho)$ and $(E'_H\, ,E'_K\, ;\rho')$ over $N$ is
an isomorphism of equivariant hermitian principal $H$--bundles
$$
f_0\, :\, E_H\, \longrightarrow\, E'_H
$$
that takes the almost complex structure on $E_H$ to that on $E'_H$.

\begin{lemma}\label{lem1}
Let $(E_H\, ,E_K\, ;\rho)$ be an equivariant almost holomorphic hermitian
principal $H$--bundle on $N$. Then there is a unique connection $\nabla^K$
on the principal $K$--bundle $E_K$ satisfying the following two conditions:
\begin{enumerate}
\item The connection $\nabla^H$ on $E_H$ induced by $\nabla^K$ lies in the
equivalence class defining the almost holomorphic structure on $E_H$, and

\item the connection $\nabla^K$ is invariant.
\end{enumerate}
\end{lemma}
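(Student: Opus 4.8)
The plan is to produce $\nabla^K$ by first choosing any hermitian connection with the correct holomorphic type, and then averaging it over the compact part of the group to make it invariant, checking that averaging preserves the type condition. More precisely, I would begin with the observation that on a principal $K$--bundle $E_K$ there is a canonical bijection between connections on $E_K$ and connections on $E_H$ that are compatible with the reduction (the \emph{hermitian} connections on $E_H$); this is the standard Chern correspondence for principal bundles. Under this bijection, an almost holomorphic structure on $E_H$, i.e.\ an equivalence class of connections modulo $C^\infty(N;\,\mathrm{ad}(E_H)\otimes\Omega^{1,0}_N)$, singles out a \emph{unique} hermitian connection: since a hermitian connection is determined by its $(0,1)$--part and the equivalence relation only changes the $(1,0)$--part, each almost holomorphic structure contains exactly one hermitian connection. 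Call the corresponding $K$--connection $\nabla_0^K$. This already gives condition $(1)$ and uniqueness of a connection satisfying $(1)$ alone, but $\nabla_0^K$ need not be invariant.

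\textbf{Making the connection invariant.}
To upgrade $\nabla_0^K$ to an invariant connection I would average over $G$. The subtlety is that $G\,=\,N\rtimes K(S)$ is noncompact because of the factor $N$, so one cannot naively integrate over all of $G$. Instead I would exploit the structure of the action: the group $N$ acts on itself by left translations, which are \emph{transitive}, so invariance under $N$ is not an averaging condition but rather the statement that the connection is determined by its value at the identity $e_N\,\in\,N$ together with equivariance; meanwhile $K(S)$ is compact and fixes $e_N$, so one genuinely averages only over the compact group $K(S)$ using its Haar measure. Concretely, I would set $\nabla^K\,:=\,\int_{K(S)}\rho(g)^*\nabla_0^K\,dg$ relative to the $N$--action, or equivalently transport the fibrewise data at $e_N$ to all of $N$ by the $N$--action and symmetrize the fibre datum at $e_N$ over the isotropy group $K(S)$. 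Because averaging is a convex (affine) operation on the space of connections and the $G$--action preserves both the reduction $E_K$ (condition $(3)$ in Definition \ref{def1}) and the almost complex structure (Definition \ref{def-2}), the averaged connection is again a hermitian connection whose induced $H$--connection lies in the same almost holomorphic equivalence class; thus condition $(1)$ survives the averaging.

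\textbf{Uniqueness.}
For uniqueness I would argue as follows. Suppose $\nabla^K$ and $\widetilde\nabla^K$ both satisfy $(1)$ and $(2)$. Their difference $\theta\,:=\,\nabla^K-\widetilde\nabla^K$ is a section of $\mathrm{ad}(E_K)\otimes(T^*N\otimes_{\mathbb R}\mathbb C)$. Passing to the induced $H$--connections, condition $(1)$ forces the two induced connections to lie in the same equivalence class, so $\theta$, viewed in $\mathrm{ad}(E_H)\otimes(\Omega^{1,0}_N\oplus\Omega^{0,1}_N)$, has vanishing $(0,1)$--component. On the other hand both connections are \emph{hermitian} (they are $K$--connections), and for a hermitian connection the $(1,0)$ and $(0,1)$ parts of the adjoint-valued one--form are conjugate under the real structure $\mathfrak{h}=\mathfrak{k}\otimes\mathbb C$; hence vanishing of the $(0,1)$--part forces the $(1,0)$--part to vanish as well, giving $\theta\,=\,0$. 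This reality constraint is exactly the point where using the $K$--reduction rather than working on $E_H$ directly is essential, and I expect verifying that the averaged connection still lies in the prescribed equivalence class (i.e.\ that averaging does not leak into the $(0,1)$--direction) to be the main technical obstacle; it will follow from the hypotheses that $\rho$ preserves both $E_K$ and the almost complex structure, so that each $\rho(g)^*\nabla_0^K$ is itself a hermitian connection in the same class.
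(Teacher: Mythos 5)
Your proposal is correct, but it takes a longer road than necessary, and the detour obscures the one observation that makes the lemma essentially immediate. The first half of your argument --- the Chern correspondence giving a \emph{unique} $K$--connection $\nabla_0^K$ on $E_K$ whose induced $H$--connection lies in the prescribed equivalence class --- is exactly what the paper invokes (citing Atiyah rather than reproving it via the reality argument for ${\mathfrak h}\,=\,{\mathfrak k}\otimes_{\mathbb R}{\mathbb C}$, which is what your uniqueness paragraph amounts to). But at that point you are already done: for each $g\,\in\, G$ the map $z\,\longmapsto\,\rho(g,z)$ preserves $E_K$ and the almost holomorphic structure, so the pulled--back connection is again a $K$--connection on $E_K$ whose induced $H$--connection lies in the same equivalence class; by the uniqueness you have just established, it must coincide with $\nabla_0^K$. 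Hence $\nabla_0^K$ is automatically invariant, and no averaging is needed --- this is precisely the paper's two--line proof. Your averaging construction is not wrong (each term $\rho(g)^*\nabla_0^K$ being averaged is, by this reasoning, already equal to $\nabla_0^K$, so the average trivially returns $\nabla_0^K$), but it imports genuine complications --- the noncompactness of $N$, the transport--and--symmetrize construction over the isotropy group, and the verification that averaging does not leak into the $(0,1)$--direction --- all of which evaporate once uniqueness is exploited. In particular, the step you flag as the ``main technical obstacle'' is exactly the step that the uniqueness argument renders moot, and your assertion that $\nabla_0^K$ ``need not be invariant'' is in fact false under your own hypotheses: it always is.
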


\begin{proof}
Since $H$ is complex reductive, there is a unique connection $\nabla^K$
on the principal $K$--bundle $E_K$ such that the connection $\nabla^H$ on $E_H$ induced
by $\nabla^K$ lies in the equivalence class defining the almost holomorphic structure
on $E_H$ \cite[pp. 191--192, Proposition 5]{At}. From the uniqueness of the connection
$\nabla^K$ it follows immediately that it is invariant.
\end{proof}

\section{Equivariant almost holomorphic hermitian principal bundles}\label{sec3}

Consider $G$ defined in \eqref{h1}.
We note that $G/K(S) \,=\, (N\rtimes S)/S\,=\, N$. Therefore, the quotient map
\begin{equation}\label{pb2}
G\,\longrightarrow\, G/K(S) \,=\, N
\end{equation}
defines a reduction of structure group of the principal $S$--bundle in \eqref{pb} to
the subgroup $K(S)\,\subset\, S$. For any $g\,\in\, N\rtimes S$, the leaf of the
foliation ${\mathcal H}$ in \eqref{cH} passing through the point $g$ is $gN$. Therefore,
if $g\,\in\, G$, then the leaf passing through $g$ is contained in $G$. This implies
that the connection $\nabla^S$ constructed in Section \ref{sec.conn}
produces a connection on the principal $K(S)$--bundle
$G\,\longrightarrow\, N$ in \eqref{pb2}. This induced connection on the
$K(S)$--bundle is flat because $\nabla^S$ is so.

Since the principal $K(S)$--bundle in \eqref{pb2} is a reduction of structure
group of the holomorphic principal $S$--bundle in \eqref{pb} to the maximal compact subgroup
$K(S)\,\subset\, S$, and $S$ is reductive, there is a unique connection
$\nabla^{K(S)}$ on the principal $K(S)$--bundle in \eqref{pb2}
such that the connection on the holomorphic principal $S$--bundle in \eqref{pb} induced by
$\nabla^{K(S)}$ gives the almost holomorphic structure of it
\cite[pp. 191--192, Proposition 5]{At}.

\begin{lemma}\label{lem0}
The above connection $\nabla^{K(S)}$ coincides with the flat connection
on the principal $K(S)$--bundle $G\,\longrightarrow\, N$ induced by $\nabla^S$.
In particular, $\nabla^{K(S)}$ is flat.
\end{lemma}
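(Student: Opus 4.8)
The plan is to lean entirely on the uniqueness clause of Atiyah's Proposition 5, since that is precisely what pins down $\nabla^{K(S)}$: among all connections on the reduced principal $K(S)$--bundle in \eqref{pb2}, it is the \emph{unique} one whose induced connection on the holomorphic principal $S$--bundle in \eqref{pb} lies in the equivalence class giving the almost holomorphic (here genuinely holomorphic) structure. Writing $\widetilde\nabla$ for the flat connection on $G\,\longrightarrow\, N$ induced by $\nabla^S$, it therefore suffices to verify that $\widetilde\nabla$ enjoys this characterizing property; uniqueness then forces $\widetilde\nabla\,=\,\nabla^{K(S)}$, and the flatness of $\widetilde\nabla$ already recorded in the text yields the final assertion.

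First I would make precise how $\widetilde\nabla$ arises. The horizontal distribution of $\nabla^S$ is $\mathcal H$, and, as observed just before the statement, for $g\,\in\, G$ the leaf $gN$ through $g$ lies in $G$, so $\mathcal H$ is tangent to $G$ along $G$. Thus $\widetilde\nabla$ is exactly the connection on the $K(S)$--bundle whose horizontal distribution is the restriction $\mathcal H|_G$; in other words $\nabla^S$ reduces to the subbundle $G$, with reduction $\widetilde\nabla$. The key step is then to check that \emph{extending} $\widetilde\nabla$ along $K(S)\,\hookrightarrow\, S$ recovers $\nabla^S$ itself. The horizontal distribution of the extended $S$--connection at a point $g\cdot s$ (with $g\,\in\, G$, $s\,\in\, S$) is the right--$s$--translate of $\mathcal H|_G$ at $g$. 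Since $K(S)\cdot S\,=\, S$ we have $G\cdot S\,=\, N\rtimes S$, so every point has such a form; and because $\mathcal H$ is preserved by the right--translation action of $S$ (established in Section \ref{sec.conn}), the right--$s$--translate of $\mathcal H_g$ is $\mathcal H_{gs}$. Hence the extended distribution is $\mathcal H$ again, i.e. the $S$--connection induced by $\widetilde\nabla$ is precisely $\nabla^S$.

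It remains to note that $\nabla^S$ is a holomorphic connection on the holomorphic bundle \eqref{pb}, so its $(0,1)$--part coincides with the Dolbeault operator defining the holomorphic structure; equivalently, $\nabla^S$ lies in the equivalence class of connections defining the almost holomorphic structure. This is exactly the characterizing property of $\nabla^{K(S)}$, now verified for $\widetilde\nabla$, so $\widetilde\nabla\,=\,\nabla^{K(S)}$ and the latter is flat. There is no serious obstacle here: the lemma is driven by uniqueness, and the only point demanding care is the reduction/extension bookkeeping of the previous paragraph — that restricting $\mathcal H$ to $G$ and then taking its right--$S$--orbit returns $\mathcal H$ — which rests on the $S$--invariance of $\mathcal H$ together with $G\cdot S\,=\, N\rtimes S$. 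Once this is in hand the identification is immediate.
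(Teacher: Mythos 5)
Your proposal is correct and follows the same route as the paper: the paper's proof is a one--line appeal to the uniqueness clause of Atiyah's Proposition 5, noting that $\nabla^S$ restricts to a connection on the $K(S)$--bundle and is holomorphic, hence complex. You simply spell out the details the paper leaves implicit (that $\mathcal H$ is tangent to $G$, and that extending the restricted connection back along $K(S)\hookrightarrow S$ recovers $\nabla^S$ via the right--$S$--invariance of $\mathcal H$), which is a faithful elaboration rather than a different argument.
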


\begin{proof}
Since $\nabla^S$ induces a connection on the principal $K(S)$--bundle in \eqref{pb2},
and $\nabla^S$ is holomorphic, in particular it is complex, the lemma follows from 
the uniqueness of $\nabla^{K(S)}$.
\end{proof}

The Lie algebra of $K$ will be denoted by $\mathfrak k$. Let
\begin{equation}\label{cw}
{\mathcal W}\, :=\, \text{Hom}_{\mathbb R}({\mathfrak n}\, ,{\mathfrak k})
\end{equation}
be the space of all $\mathbb R$--linear homomorphisms from the vector space
$\mathfrak n$ to the vector space $\mathfrak k$. We emphasize that the
elements of $\mathcal W$ need not be Lie algebra homomorphisms.

The action of $K(S)$ on $N$ (see \eqref{eta}) produces an action of
$K(S)$ on ${\mathfrak n}$. Given a homomorphism $K(S)\,\longrightarrow\, K$ (see
\eqref{e2} for $K$), the adjoint action of $K$ on $\mathfrak k$ produces an action of
$K(S)$ on $\mathfrak k$. Therefore, given a homomorphism $K(S)\,\longrightarrow\,
K$, combining the actions of $K(S)$ on ${\mathfrak n}$ and $\mathfrak k$,
we get an action of $K(S)$ on ${\mathcal W}$ defined in \eqref{cw}.

Consider all pairs
\begin{equation}\label{b0}
(\beta\, ,\omega)\, ,
\end{equation}
where
\begin{itemize}
\item $\beta\, :\, K(S)\,\longrightarrow\, K$ is a homomorphism, and

\item $\omega\,\in\, {\mathcal W}^{K(S)}\,\subset\, \mathcal W$.
\end{itemize}
The action of $K(S)$ on $\mathcal W$ is
constructed as above, and ${\mathcal W}^{K(S)}$ is the
space of invariants. Two such pairs $(\beta\, ,\omega)$ and $(\beta'\, ,\omega')$
are called {\it equivalent} if there is an element $k\,\in\, K$ such that
\begin{itemize}
\item $\beta'(g)\,=\, k\beta(g)k^{-1}$ for all $g\,\in\, K(S)$, and

\item $\omega'(v)\, =\, \text{Ad}(k)((\omega)(v))$ for all $v\,\in\,
\mathfrak n$, where
\begin{equation}\label{ad}
\text{Ad}(k)\, :\, {\mathfrak k}\,\longrightarrow\,{\mathfrak k}
\end{equation}
is the automorphism corresponding to the automorphism of $K$ defined by $x\, \longmapsto\,
kxk^{-1}$.
\end{itemize}
Let
\begin{equation}\label{c}
\mathcal C
\end{equation}
denote the set of equivalence classes of pairs $(\beta\, ,\omega)$ of the above type.

Consider all quadruples of the form $((E_H\, ,E_K\, ;\rho)\, ,\nabla^K)$,
where $(E_H\, ,E_K\, ;\rho)$ is an equivariant hermitian principal $H$--bundle
on $N$, and $\nabla^K$ is an invariant connection on $E_K$. Two such objects
$((E_H\, ,E_K\, ;\rho)\, ,\nabla^K)$ and $((E'_H\, ,E'_K\, ;\rho')\, ,\nabla')$
are called {\it isomorphic} if there is an isomorphism of equivariant hermitian
principal $H$--bundles between $(E_H\, ,E_K\, ;\rho)$ and $(E'_H\, ,E'_K\, ;\rho')$
that takes the connection $\nabla$ to $\nabla'$.

\begin{proposition}\label{prop1}
The set of isomorphism classes of quadruples $((E_H\, ,E_K\, ;\rho)\, ,\nabla^K)$
of the above type is in bijection with the set $\mathcal C$ in \eqref{c}.
\end{proposition}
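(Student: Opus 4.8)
The plan is to exploit the identification $N\,=\, G/K(S)$ together with the flat connection $\nabla^{K(S)}$ of Lemma \ref{lem0}, turning the statement into an explicit form of the classical bijection between invariant connections on a homogeneous bundle and equivariant linear maps on the isotropy complement. For the action in \eqref{e1} the point $e_N\,\in\, N$ has isotropy subgroup exactly $K(S)\,\subset\, G$, and the decomposition $\mathrm{Lie}(G)\,=\, {\mathfrak n}\oplus \mathrm{Lie}(K(S))$ has ${\mathfrak n}$ as its canonical $\mathrm{Ad}(K(S))$--invariant complement, with the $\mathrm{Ad}(K(S))$--action on ${\mathfrak n}$ being the one induced by $\eta$.

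First I would produce the map from quadruples to $\mathcal C$. Given $((E_H\, ,E_K\, ;\rho)\, ,\nabla^K)$, the subgroup $K(S)$ acts on the fiber $(E_K)_{e_N}$, a $K$--torsor, commuting with the $K$--action; fixing $p_0\,\in\, (E_K)_{e_N}$ therefore yields a homomorphism $\beta\, :\, K(S)\,\longrightarrow\, K$ through $\rho(g\, ,p_0)\,=\, p_0\cdot \beta(g)$. Pushing forward $\nabla^{K(S)}$ by $\beta$ gives a flat, $G$--invariant reference connection $\nabla^0$ on $E_K$, and I set $\omega\, :=\, (\nabla^K-\nabla^0)_{e_N}$, an element of $\mathrm{Hom}_{\mathbb R}(T_{e_N}N\, ,(E_K\times^K{\mathfrak k})_{e_N})\,=\, \mathrm{Hom}_{\mathbb R}({\mathfrak n}\, ,{\mathfrak k})\,=\, {\mathcal W}$ after the identifications $(E_K\times^K{\mathfrak k})_{e_N}\,\cong\, {\mathfrak k}$ (via $p_0$) and $T_{e_N}N\,\cong\,{\mathfrak n}$. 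Invariance of both $\nabla^K$ and $\nabla^0$ forces the difference $1$--form to be $G$--invariant, hence determined by its value at $e_N$, and that same invariance makes $\omega$ fixed by the isotropy action, i.e. $\omega\,\in\, {\mathcal W}^{K(S)}$. Replacing $p_0$ by $p_0\cdot k$ replaces $(\beta\, ,\omega)$ by an equivalent pair, so the class $[(\beta\, ,\omega)]\,\in\, \mathcal C$ is well defined, and an isomorphism of quadruples manifestly yields the same class.

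For the inverse, given $(\beta\, ,\omega)$ I would set $E_K\, :=\, G\times_\beta K$, a $G$--equivariant principal $K$--bundle with $G$ acting by left translation on the $G$--factor, and $E_H\, :=\, E_K\times_K H$, so that $E_K\,\subset\, E_H$ is the required hermitian structure with its action $\rho$. Pushing $\nabla^{K(S)}$ forward by $\beta$ gives $\nabla^0$, and to it I add the unique $G$--invariant $(E_K\times^K{\mathfrak k})$--valued $1$--form whose value at $e_N$ is $\omega$; such a form exists and is unique precisely because $\omega$ is $K(S)$--invariant. The resulting $\nabla^K$ is invariant, and a direct check shows these two assignments are mutually inverse on isomorphism/equivalence classes.

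The main obstacle will be the bookkeeping that makes everything canonical. One must verify that the pushed--forward $\nabla^0$ corresponds to $\omega\,=\, 0$, i.e. that the horizontal space of $\nabla^{K(S)}$ at the identity is exactly ${\mathfrak n}$; this follows from the construction of $\mathcal H$ in \eqref{cH}, whose fiber at the identity is ${\mathfrak n}\,\subset\,{\mathfrak p}$. One must also confirm that the change $p_0\,\mapsto\, p_0\cdot k$ reproduces exactly the relation defining $\mathcal C$ (the automorphism \eqref{ad} entering through conjugation of $\beta$ and $\mathrm{Ad}(k)\circ\omega$). Finally, one must check that \emph{every} equivariant principal $K$--bundle over $N\,=\, G/K(S)$ is of the associated form $G\times_\beta K$, which is where transitivity of the $G$--action is used; granting this, the verifications that the two maps are mutually inverse and respect isomorphisms are routine.
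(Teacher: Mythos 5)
Your proposal is correct and follows essentially the same route as the paper: both directions use the flat connection $\nabla^{K(S)}$ on $G\rightarrow N$, the associated bundle $G\times_\beta K$, and the correspondence between $K(S)$--fixed elements of $\mathcal W$ and $G$--invariant $\operatorname{ad}(E_K)$--valued $1$--forms. The only cosmetic difference is that the paper extracts $\omega$ from a given quadruple as $d'-\delta$, comparing the $\nabla^K$--horizontal lift with the infinitesimal $N$--action at a point $z_0\in (E_K)_{e_N}$, which is the same thing as your $(\nabla^K-\nabla^0)_{e_N}$ since the $N$--orbit directions are exactly the horizontal spaces of the pushed--forward reference connection.
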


\begin{proof}
Consider the flat connection $\nabla^{K(S)}$ on the principal $K(S)$--bundle $G\,
\longrightarrow\, N$ (see Lemma \ref{lem0}). The left-translation action of $G$ on
itself clearly preserves this connection. Indeed, this follows immediately from the
fact that the distribution $\mathcal H$ in \eqref{cH} is preserved by the
left-translation action of $N\rtimes S$ on itself. Note that by Lemma \ref{lem0},
the connection $\nabla^{K(S)}$ coincides with the one induced by the connection
$\nabla^S$ that $\mathcal H$ defines.

Take any pair $(\beta\, ,\omega)$ as in \eqref{b0}. Let $E_K$ be the
principal $K$--bundle on $N$ obtained by extending the structure group of
the principal $K(S)$--bundle $G\, \longrightarrow\, N$ using the homomorphism $\beta$.
The connection $\nabla^{K(S)}$ on the principal $K(S)$--bundle $G\,
\longrightarrow\, N$ induces a connection on the associated bundle $E_K$.
This induced connection on the principal $K$--bundle $E_K$ will be denoted by
$\nabla'$. Note that the total space of $E_K$ is the quotient of $G\times K$
where two points $(g_1\, ,k_1)$ and $(g_2\, ,k_2)$ are identified if there is
an element $g\,\in\, K(S)$ such that $g_2\,=\, g_1g$ and $k_2\,=\,
\beta(g)^{-1}k_1$. Therefore, the left--translation action of $G$ on
$G\times K$ descends to an action of $G$ on the quotient space $E_K$. This
action of $G$ on $E_K$ clearly commutes with the action of $K$ on
the principal $K$--bundle $E_K$. Also, the action of $G$ on $E_K$ preserves the
connection $\nabla'$ because the action of $G$ on the the principal $K(S)$--bundle
$G\, \longrightarrow\, N$ preserves the connection $\nabla^{K(S)}$.

Let $\text{ad}(E_K)\,=\, E_K\times^K {\mathfrak k}$ be the adjoint vector
bundle for $E_K$. The action of $G$ on $E_K$ defines an action on
$\text{ad}(E_K)$. This action of $G$ on $E_K$ and the action of $G$ on $N$
together produce an action of $G$ on the vector bundle $\text{ad}(E_K)\otimes
T^*N$, where $T^*N$ is the real cotangent vector bundle on $N$. Consider
fiber $(\text{ad}(E_K)\otimes T^*N)_{e_N}$ of $\text{ad}(E_K)\otimes T^*N$
over the identity element $e_N$. We will show that it is canonically identified
with $\text{Hom}_{\mathbb R}({\mathfrak n}\, ,{\mathfrak k})$.

The fiber $T^*_{e_N}N$ is identified with
${\mathfrak n}^*$. The fiber $(E_K)_{e_N}$ is canonically identified with
$K$ by sending any $k\, \in\, K$ to the equivalence class of $(e_N\, ,k)$
(recall that $E_K$ is a quotient of $G\times K$). The identification
between $(E_K)_{e_N}$ and $K$ produces an isomorphism between
$\text{ad}(E_K)_{e_N}$ and $\mathfrak k$ by sending any $v\, \in\,
\mathfrak k$ to the equivalence class of $(e_K\, ,v)\,\in\, 
K\times \mathfrak k$, where $e_K$ is the identity element of $K$;
the vector bundle $\text{ad}(E_K)$ is a quotient of $E_K\times\mathfrak k$,
and using the identification of $K$ with $(E_K)_{e_N}$, the identity element
$e_K$ gives an element of $(E_K)_{e_N}$. Therefore, the fiber
$(\text{ad}(E_K)\otimes T^*N)_{e_N}$ is canonically identified
with ${\mathfrak n}^*\otimes {\mathfrak k}\,=\,
\text{Hom}_{\mathbb R}({\mathfrak n}\, ,{\mathfrak k})$.

Using the above identification of $\text{Hom}_{\mathbb R}({\mathfrak n}\, ,
{\mathfrak k})$ with $(\text{ad}(E_K)\otimes T^*N)_{e_N}$, the element
$\omega\, \in\, \text{Hom}_{\mathbb R}({\mathfrak n}\, ,
{\mathfrak k})$ gives an element of $(\text{ad}(E_K)\otimes T^*N)_{e_N}$. Since
$\omega$ is fixed by the action of $K(S)$, there is a unique
$G$--invariant $C^\infty$ section $\widetilde\omega$ of $\text{ad}(E_K)\otimes
T^*N$ such that
$$
{\widetilde\omega}(e_N)\,=\, \omega\, .
$$
Consider the connection $\nabla'+{\widetilde\omega}$ on $E_K$. Since both
$\nabla'$ and $\widetilde\omega$ are preserved by the action of $G$, it follows
immediately the connection $\nabla'+{\widetilde\omega}$ is also
preserved by the action of $G$.

Let $E_H$ be the principal $H$--bundle on $N$ obtained by extending the
structure group of the principal $K$--bundle $E_K$ using the inclusion of
$K$ in $H$. Consider the connection on $E_H$ induced by $\nabla'+{\widetilde\omega}$.
The $(0\, ,1)$--type component of it produces a holomorphic structure on $E_H$
because the connection comes from a connection on $E_K$. The action of $G$ on
$E_K$ produces an action of $G$ on the associated by bundle $E_G$. This
action of $E_G$ will be denoted by $\rho$.

Therefore, the quadruple $((E_H\, ,E_K\, ;\rho)\, ,\nabla'+{\widetilde\omega})$
satisfies all the required conditions.

To construct in inverse map, take any quadruple $((E_H\, ,E_K\, ;\rho)\, ,\nabla^K)$
as in the lemma. Fix a point
$$
z_0\, \in\, (E_K)_{e_N}
$$
in the fiber over the identity element $e_N$. Let
$$
\beta\, :\, K(S)\, \longrightarrow\, K
$$
be the map defined by $\rho(g\, , z_0)\,=\, z_0\beta(g)$, $g\, \in\, K(S)$.
We have
$$
z_0\beta(gh)\,=\,\rho(gh\, , z_0)\,=\,\rho(g\, , \rho(h\, ,z_0)) \,=\,
\rho(g\, , z_0\beta(h))
$$
$$
\,=\,\rho(g\, , z_0)\beta(h)\,=\, z_0\beta(g)\beta(h)\, .
$$
Therefore, $\beta$ is a homomorphism.

Let $d\, :\, T_{z_0}E_K\, \longrightarrow\, T_{e_N} N\,=\, \mathfrak n$
be the differential, at $z_0$, of the natural projection of $E_K$ to $N$.
Let ${\mathcal H}_{z_0}\, \subset\, T_{z_0}E_K$ be the horizontal subspace
for the connection $\nabla^K$ on $E_K$. The restriction of the homomorphism
$d$ to ${\mathcal H}_{z_0}$ is an isomorphism. Let
$$
d'\, :\, {\mathfrak n}\, \stackrel{\sim}{\longrightarrow}\,
{\mathcal H}_{z_0}\, \subset\, T_{z_0}E_K
$$
be the inverse of $d\vert_{{\mathcal H}_{z_0}}$. We note that the action of $N$ on
$E_K$ given by $\rho$ produces a homomorphism
$$
\delta\, :\,  {\mathfrak n}\, \longrightarrow\,T_{z_0}E_K\, .
$$
Now note that the image of the homomorphism
$$
d'-\delta\, :\, {\mathfrak n}\,\longrightarrow\,T_{z_0}E_K
$$
lies in the kernel of the homomorphism $d$. The kernel of $d$ is identified
with the Lie algebra $\mathfrak k$ because $(E_K)_{e_N}$ is an orbit of
the free action of $K$ on $E_K$. Therefore, we have
$$
d'-\delta\, :\, {\mathfrak n}\,\longrightarrow\, {\mathfrak k}
$$
In other words,
$$
\omega\, :=\, d'-\delta\, \in\, \text{Hom}_{\mathbb R}({\mathfrak n}\, ,
{\mathfrak k})\, .
$$
Since the connection $\nabla^K$ is preserved by the action of $G$, it
follows that $\omega$ is fixed by the action of $K(S)$.

Therefore, $(\beta\, ,\omega)\, \in\, {\mathcal C}$.

It is straightforward to check that the above two constructions are inverses of
each other.
\end{proof}

\begin{lemma}\label{lem2}
The set of isomorphism classes of equivariant almost holomorphic hermitian
principal $H$--bundle over $N$ is in bijection with $\mathcal C$ in \eqref{c}.
\end{lemma}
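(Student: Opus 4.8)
The plan is to deduce the statement from Proposition \ref{prop1} by exhibiting a natural bijection between isomorphism classes of equivariant almost holomorphic hermitian principal $H$--bundles on $N$ and isomorphism classes of the quadruples $((E_H\, ,E_K\, ;\rho)\, ,\nabla^K)$ considered there. The bridge between the two notions is Lemma \ref{lem1}, which attaches to every equivariant almost holomorphic hermitian bundle a canonical invariant connection on $E_K$; the whole point is that forgetting the connection loses no information, since Lemma \ref{lem1} allows one to recover it uniquely.

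First I would construct the two assignments at the level of objects. Given an equivariant almost holomorphic hermitian bundle $(E_H\, ,E_K\, ;\rho)$, Lemma \ref{lem1} produces a unique invariant connection $\nabla^K$ on $E_K$ whose induced connection $\nabla^H$ on $E_H$ lies in the equivalence class defining the almost holomorphic structure; this yields the quadruple $((E_H\, ,E_K\, ;\rho)\, ,\nabla^K)$. Conversely, given a quadruple $((E_H\, ,E_K\, ;\rho)\, ,\nabla^K)$ with $\nabla^K$ invariant, I would equip $E_H$ with the almost holomorphic structure given by the equivalence class of the induced connection $\nabla^H$. Since $\nabla^K$ is invariant, so is $\nabla^H$, and hence the almost complex structure it defines is preserved by the action $\rho$ of $G$, so that $(E_H\, ,E_K\, ;\rho)$ becomes an equivariant almost holomorphic hermitian bundle. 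These two assignments are mutually inverse at the object level: one composite reproduces the original almost holomorphic structure by condition $(1)$ of Lemma \ref{lem1}, while the other reproduces $\nabla^K$ by the uniqueness clause of the same lemma.

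Next I would verify that these assignments respect isomorphisms, so that they descend to isomorphism classes. The only point that requires argument --- and the place where I expect the single (mild) subtlety to lie --- is that an isomorphism $f_0$ of equivariant almost holomorphic hermitian bundles automatically carries the canonical connection on $E_K$ to the canonical connection on $E'_K$. The argument is that $f_0$ transports the canonical connection $\nabla^K$ to an invariant connection on $E'_K$ whose induced connection on $E'_H$ again represents the almost holomorphic structure of $E'_H$; this is exactly because $f_0$ is $G$--equivariant and takes the almost holomorphic structure of $E_H$ to that of $E'_H$. By the uniqueness statement of Lemma \ref{lem1}, the transported connection must coincide with the canonical connection on $E'_K$, so $f_0$ is in fact an isomorphism of quadruples. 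The converse implication is immediate: an isomorphism of quadruples preserves the connections, hence preserves the induced almost holomorphic structures, and so is an isomorphism of equivariant almost holomorphic hermitian bundles.

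Combining these observations, the forgetful correspondence is a bijection between the set of isomorphism classes of equivariant almost holomorphic hermitian principal $H$--bundles and the set of isomorphism classes of quadruples. Composing it with the bijection of Proposition \ref{prop1} between the latter set and $\mathcal C$ yields the desired bijection, completing the proof.
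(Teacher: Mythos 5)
Your proposal is correct and follows exactly the route the paper takes: it uses Lemma \ref{lem1} to identify isomorphism classes of equivariant almost holomorphic hermitian bundles with isomorphism classes of the quadruples of Proposition \ref{prop1}, and then invokes that proposition. The paper's own proof is a two-line version of this; your extra verification that isomorphisms are respected (via the uniqueness clause of Lemma \ref{lem1}) is a correct elaboration of what the paper leaves implicit.
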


\begin{proof}
Using Lemma \ref{lem1}, the set of isomorphism classes of equivariant almost holomorphic
hermitian principal $H$--bundle over $N$ is identified with set of the isomorphism classes
of quadruples $((E_H\, ,E_K\, ;\rho)\, ,\nabla^K)$ in Proposition \ref{prop1}. Therefore,
the lemma follows from Proposition \ref{prop1}.
\end{proof}

As before, the Lie algebra of $H$ will be denoted by
$\mathfrak h$. Since $K$ is a maximal compact subgroup
of the complex reductive group $H$, the inclusion of $\mathfrak k$ in $\mathfrak h$ produces
a $\mathbb C$--linear
isomorphism of ${\mathfrak k}\otimes_{\mathbb R}{\mathbb C}$ with $\mathfrak h$.
Let $\overline{\mathfrak n}$ be the complex vector space conjugate to $\mathfrak n$.
So the underlying real vector space for $\overline{\mathfrak n}$ is the
underlying real vector space for $\mathfrak n$, but the multiplication by $\lambda\,\in\,
\mathbb C$ on $\overline{\mathfrak n}$ is the multiplication by $\overline{\lambda}$
on $\mathfrak n$. Therefore, $\mathcal W$ (see \eqref{cw}) has the following natural
identification
\begin{equation}\label{ii}
{\mathcal W}\,=\,
\text{Hom}_{\mathbb C}(\overline{\mathfrak n}\, ,{\mathfrak h})\,=\,
{\mathfrak h}\otimes_{\mathbb C}\overline{\mathfrak n}^*\, .
\end{equation}
This identification commutes with the actions of $G$.

\section{Equivariant holomorphic hermitian principal bundles}

Now assume that the connected component of the center of $K(S)$ containing the
identity element is nontrivial. Fix a subgroup
\begin{equation}\label{Z}
Z\,=\, \text{U}(1) \,\subset\, K(S)
\end{equation}
contained in the center of $K(S)$.

Henceforth, we assume that there is a nontrivial character
\begin{equation}\label{chi0}
\chi_0\, :\, Z\,\longrightarrow\, {\mathbb C}^*
\end{equation}
such that the action of any $g\,\in\, Z$ on the Lie algebra ${\mathfrak n}$, given by
$\eta$ in \eqref{eta}, is multiplication by $\chi_0(g)$.

Consider $\mathcal W\,=\, \text{Hom}_{\mathbb C}(\overline{\mathfrak n}\, ,{\mathfrak h})$
defined in \eqref{cw} (see \eqref{ii}). Any $\mathbb C$--linear map
$$
\alpha\, :\, \overline{\mathfrak n}\,\longrightarrow\,\mathfrak h
$$
produces a linear map $\bigwedge^2 \alpha\, :\, \bigwedge^2 \overline{\mathfrak n}\,
\longrightarrow\,\bigwedge^2 \mathfrak h$. Composing $\bigwedge^2 \alpha$ with the
Lie bracket $\bigwedge^2 \mathfrak h\, \longrightarrow\, \mathfrak h$, we get
a $\mathbb C$--linear map
\begin{equation}\label{f}
\varphi(\alpha)\, :\, \bigwedge\nolimits^2 \overline{\mathfrak n}\,\longrightarrow\,
\mathfrak h\, .
\end{equation}
Define the subset of $\mathcal C$ (see \eqref{c})
\begin{equation}\label{c0}
{\mathcal C}_0\, :=\, \{(\beta\, ,\omega)\,\in\, {\mathcal C}\,\mid\,
\varphi(\omega)\,=\, 0\}\,\subset\, \mathcal C\, ,
\end{equation}
where the map $\varphi$ is constructed in \eqref{f}.

\begin{theorem}\label{thm1}
The set of isomorphism classes of equivariant holomorphic hermitian principal
$H$--bundle over $N$ is in bijection with ${\mathcal C}_0$ defined in \eqref{c0}.
\end{theorem}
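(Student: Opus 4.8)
The plan is to build on Lemma \ref{lem2}, which already gives a bijection between equivariant \emph{almost} holomorphic hermitian principal $H$--bundles and $\mathcal C$. The only additional requirement for a \emph{holomorphic} (as opposed to almost holomorphic) structure is integrability, i.e. the vanishing of the $(0,2)$--component of the curvature of the connection $\nabla^K$ produced by Lemma \ref{lem1} (see \eqref{ic}). So the entire content of the theorem reduces to identifying, under the bijection of Lemma \ref{lem2}, exactly which pairs $(\beta\, ,\omega)\in\mathcal C$ correspond to bundles whose canonical connection has $\mathcal K(\nabla)^{0,2}=0$. I would therefore take an arbitrary pair $(\beta\, ,\omega)$, run it through the explicit construction in the proof of Proposition \ref{prop1} to get the connection $\nabla'+\widetilde\omega$ on $E_K$, pass to the induced connection $\nabla^H$ on $E_H$, and compute its $(0,2)$--curvature. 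The claim to be proved is then precisely that this $(0,2)$--curvature vanishes if and only if $\varphi(\omega)=0$.

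First I would recall that $\nabla'$ is the connection induced by the flat connection $\nabla^{K(S)}$ on $G\longrightarrow N$; since $\nabla^{K(S)}$ is flat (Lemma \ref{lem0}) and moreover is induced from the holomorphic, hence complex, connection $\nabla^S$, the curvature of $\nabla'$ vanishes and in particular its $(0,2)$--part is zero. Thus the entire $(0,2)$--curvature of $\nabla^H$ comes from the invariant section $\widetilde\omega$, and by $G$--invariance it suffices to evaluate everything at the identity point $e_N\in N$, where $\widetilde\omega(e_N)=\omega$. The curvature of $\nabla'+\widetilde\omega$ is $\mathcal K(\nabla')+d^{\nabla'}\widetilde\omega+\tfrac12[\widetilde\omega\, ,\widetilde\omega]$; the first term vanishes, and one must argue that at $e_N$ the covariant-derivative term $d^{\nabla'}\widetilde\omega$ contributes nothing of type $(0,2)$. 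Here the hypothesis \eqref{chi0} enters decisively: because $Z=\text{U}(1)$ acts on $\mathfrak n$ by a single character $\chi_0$, the invariant section $\widetilde\omega$ is essentially constant in the relevant directions, so that its covariant derivative produces no $(0,2)$--component, isolating the bracket term as the sole obstruction.

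The remaining step is to identify the $(0,2)$--part of $\tfrac12[\widetilde\omega\, ,\widetilde\omega]$ at $e_N$ with $\varphi(\omega)$ under the identification \eqref{ii}, namely $\mathcal W=\text{Hom}_{\mathbb C}(\overline{\mathfrak n}\, ,\mathfrak h)$. Viewing $\omega$ as a map $\overline{\mathfrak n}\to\mathfrak h$, the $\mathfrak h$--valued $(0,1)$--form it represents on $N$ at $e_N$ is, up to the canonical identifications, exactly this map; its self-bracket wedged with itself lands in $\bigwedge^2\overline{\mathfrak n}^*\otimes\mathfrak h$, and the resulting element is precisely the composite of $\bigwedge^2\omega$ with the Lie bracket $\bigwedge^2\mathfrak h\to\mathfrak h$, which is $\varphi(\omega)$ by definition \eqref{f}. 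Hence $\mathcal K(\nabla^H)^{0,2}=\varphi(\omega)$ at $e_N$, and by $G$--invariance everywhere, so integrability holds iff $\varphi(\omega)=0$, i.e. iff $(\beta\, ,\omega)\in\mathcal C_0$. Since equivalence of pairs under $K$ intertwines $\varphi$ (because $\varphi$ is $\text{Ad}$--equivariant: $\varphi(\text{Ad}(k)\circ\omega)=\text{Ad}(k)\circ\varphi(\omega)$), the condition $\varphi(\omega)=0$ descends to $\mathcal C$, so $\mathcal C_0$ is well defined and the bijection of Lemma \ref{lem2} restricts to the desired bijection onto $\mathcal C_0$.

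I expect the main obstacle to be the careful bookkeeping in the curvature computation at $e_N$: keeping track of the Hodge types, verifying that $d^{\nabla'}\widetilde\omega$ genuinely has no $(0,2)$--component (this is where the single-character hypothesis \eqref{chi0} must be used cleanly rather than invoked vaguely), and correctly matching the factor and the wedge/bracket conventions so that the $(0,2)$--component equals $\varphi(\omega)$ on the nose rather than a nonzero scalar multiple of it. Once that identification is pinned down, the equivalence $\mathcal K(\nabla^H)^{0,2}=0\iff\varphi(\omega)=0$ and hence the theorem follow formally from Lemma \ref{lem2}.
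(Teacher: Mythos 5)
Your overall strategy coincides with the paper's: reduce via Lemma \ref{lem2} to the integrability condition \eqref{ic} for the invariant connection $\nabla^H+\widetilde\omega$, split its $(0,2)$--curvature into a covariant--derivative term and a quadratic term, kill the first and identify the second with $\varphi(\omega)$. The identification of $(\widetilde\omega\wedge\widetilde\omega)^{0,2}(e_N)$ with $\varphi(\omega)$, and the observation that $\varphi$ is $\mathrm{Ad}$--equivariant so that ${\mathcal C}_0$ is well defined on equivalence classes, are both correct (the latter is a point the paper leaves implicit).

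The genuine gap is the vanishing of $\widehat{\nabla}(\widetilde\omega)^{0,2}$, which you justify by saying that $\widetilde\omega$ is ``essentially constant in the relevant directions.'' That is not the mechanism, and it is doubtful it can be made into one: $\widehat{\nabla}(\widetilde\omega)$ is the covariant exterior derivative of an $\mathrm{ad}(E_H)$--valued $(0,1)$--form, so even though the coefficient of $\widetilde\omega$ is parallel for the flat connection, the derivative still acts on the form factor, and there is no reason for its $(0,2)$--part to vanish by any ``constancy.'' The paper's argument is representation--theoretic. Decompose $\mathrm{ad}(E_H)_{e_N}$ into $Z$--isotypical pieces $V^\chi$ as in \eqref{de} and propagate them to $G$--invariant flat subbundles ${\mathcal V}^\chi$. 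Since $Z$ acts on $\Omega^{0,1}_{N,e_N}=\overline{\mathfrak n}^*$ through $\chi_0$, invariance of $\omega'$ forces $\widetilde\omega$ to be a section of ${\mathcal V}^{\chi_0^{-1}}\otimes\Omega^{0,1}_N$, hence $\widehat{\nabla}(\widetilde\omega)^{0,2}$ is a $G$--invariant section of ${\mathcal V}^{\chi_0^{-1}}\otimes\Omega^{0,2}_N$. At $e_N$ the group $Z$ acts on $V^{\chi_0^{-1}}\otimes\Omega^{0,2}_{N,e_N}$ through $\chi_0^{-1}\chi_0^{2}=\chi_0\neq 1$, so the only $Z$--fixed vector is $0$; since the value at $e_N$ is $K(S)$--fixed it must vanish, and $G$--invariance then forces the section to vanish identically. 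This weight count is exactly where the hypothesis \eqref{chi0} enters, and without it your outline does not close.
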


\begin{proof}
Take a homomorphism
\begin{equation}\label{be}
\beta\, :\, K(S)\,\longrightarrow\, K\, .
\end{equation}
Consider the principal $K(S)$--bundle $G\,\longrightarrow\, N$ in \eqref{pb2}. Let
$$
E_K\, :=\, G\times^{K(S)} K\, \longrightarrow\, N
$$
be the principal $K$--bundle obtained by extending the structure group of it
using the homomorphism $\beta$ in \eqref{be}.
Consider the connection $\nabla^{K(S)}$ on the principal $K(S)$--bundle in
\eqref{pb2} (see Lemma \ref{lem0}). It induces a connection on the above associated
principal $K$--bundle $E_K$. This induced connection on $E_K$ will be denoted by
$\nabla^K$. This connection $\nabla^K$ is flat because $\nabla^{K(S)}$ is so.

The left--translation action of $G$ on itself produces a left--action of $G$ on
$E_K$. To see this action, first note that $E_K$ is the quotient of $G\times K$
where two points $(g_1\, ,k_1)$ and $(g_2\, ,k_2)$ are identified if there is an
element $z\,\in\, K(S)$ such that $g_2\,=\, g_1z$ and $k_2\,=\, \beta(z)^{-1}k_1$.
Therefore, the left--translation action of $G$ on $G\times K$ descends to a
left--action of $G$ on $E_K$.
The above connection $\nabla^{K}$ on $E_K$ is preserved by this action of $G$ on $E_K$.
Indeed, this follows immediately from the fact that the connection $\nabla^{K(S)}$
is preserved by the left--translation action of $G$ on itself (see the proof
of Proposition \ref{prop1}).

Let $E_H\, :=\, E_K\times^K H\, \longrightarrow\, N$ be the principal $H$--bundle
obtained by extending the structure group of the principal $K$--bundle $E_K$ using
the inclusion of $K$ in $H$. Note that $E_H$ is identified with the principal $H$--bundle
$$
G\times^{K(S)} H\, \longrightarrow\, N
$$
obtained by extending the structure group of the principal $K(S)$--bundle
$G\, \longrightarrow\, N$ using the composition homomorphism
$$
K(S)\,\stackrel{\beta}{\longrightarrow}\, K\, \hookrightarrow\, H\, .
$$
The left--action of $G$ on $E_K$ produces a left--action of $G$ on
$E_H$. This action of $G$ on $E_H$ will be denoted by $\rho$.
The connection $\nabla^K$ on $E_K$ induces a connection on the associated bundle $E_H$.
This induced connection on the principal $H$--bundle $E_H$ will be denoted by
\begin{equation}\label{nh}
\nabla^H\, .
\end{equation}
This connection $\nabla^H$ is flat because $\nabla^K$ is so. The flat connection
$\nabla^H$ defines a holomorphic structure on the principal $H$--bundle $E_H$. We note
that $\nabla^{H}$ is preserved by the above defined action $\rho$ of $G$ on $E_H$
because $\nabla^{K(S)}$ is preserved by the left--translation action of $G$ on itself.
This implies that for each $g\,\in\, G$, the diffeomorphism of $E_H$ given by the
action of $g$ is holomorphic.

Consequently,
\begin{equation}\label{tr}
(E_H\, ,E_K\, ; \rho)
\end{equation}
is an equivariant holomorphic hermitian principal
$H$--bundle on $N$. It corresponds to the pair $(\beta\, ,0)\,\in\, {\mathcal C}$ by
the bijection in Lemma \ref{lem2}.

Now take an invariant element
\begin{equation}\label{omega}
\omega\, \in\, {\mathcal W}^{K(S)}
\end{equation}
(see \eqref{cw} and \eqref{ii})
for the action of $K(S)$ on $\mathcal W$ constructed using $\beta$ in \eqref{be}. Let
$$
\text{ad}(E_H)\,=\, E_H\times^H {\mathfrak h}\,\longrightarrow\, N
$$
be the adjoint vector bundle for the principal $H$--bundle $E_H$ in \eqref{tr}. The fiber
$\text{ad}(E_H)_{e_N}$ is canonically identified
with the Lie algebra $\mathfrak h$ (as before, $e_N$ is the identity element of $N$).
To see this identification, first note that $\text{ad}(E_H)$ is the quotient of
$G\times H\times {\mathfrak h}$ where two elements $(g_1\, ,h_1\, ,v_1)$ and
$(g_2\, ,h_2\, ,v_2)$ of $G\times H\times {\mathfrak h}$ are identified if there are
elements $x\, \in\, K(S)$ and $h\,\in\, H$ such that
$g_2\,=\, g_1x^{-1}$, $h_2\,=\, \beta(x)h_1h^{-1}$ and $v_2\,=\, \text{Ad}(h)(v_1)$
(see \eqref{ad} for $\text{Ad}(h)$). The Lie algebra $\mathfrak h$ is
identified with the fiber $\text{ad}(E_H)_{e_N}$ by sending any $v\,\in\,
{\mathfrak h}$ to the equivalence class of $(e\, , e_H\, , v)$, where $e$ and $e_H$
are the identity elements of $G$ and $H$ respectively.

Since the holomorphic tangent space $T^{1,0}_{e_N}N$ to $N$ at $e_N$ is identified with
$\mathfrak n$, the anti-holomorphic tangent space $T^{0,1}_{e_N}N$ is identified with
$\overline{\mathfrak n}$. in view of the above isomorphism of $\mathfrak h$ with
$\text{ad}(E_H)_{e_N}$, the vector space ${\mathcal W}$ in \eqref{ii} gets identified
with the space of $\mathbb C$--linear maps $\text{Hom}_{\mathbb C}(T^{0,1}_{e_N}N\, ,
\text{ad}(E_H)_{e_N})$.

The action $\rho$ of $G$ on $E_H$ in \eqref{tr} produces an action of $G$ on the adjoint
vector bundle $\text{ad}(E_H)$. Therefore, we get an action of the isotropy subgroup $K(S)$
on the fiber $\text{ad}(E_H)_{e_N}$. The above
identification between ${\mathcal W}$ and $\text{Hom}_{\mathbb C}(T^{0,1}_{e_N}N\, ,
\text{ad}(E_H)_{e_N})$ clearly intertwines the actions of $K(S)$.

Let
$$
\omega'\, \in\, \text{Hom}_{\mathbb C}(T^{0,1}_{e_N}N\, ,
\text{ad}(E_H)_{e_N}) \,=\, \text{ad}(E_H)_{e_N}\otimes\Omega^{0,1}_{N,e_N}
\,=\, \text{ad}(E_H)_{e_N}\otimes\overline{\mathfrak n}^*
$$
be the element that corresponds to $\omega$ in \eqref{omega} by the above identification
between ${\mathcal W}$ and $\text{Hom}_{\mathbb C}(T^{0,1}_{e_N}N\, ,
\text{ad}(E_H)_{e_N})$.
Since $\omega$ is fixed by the action of $K(S)$, this element $\omega'$ is also fixed
by the action of $K(S)$. Therefore, there is a unique $G$--invariant section
\begin{equation}\label{wiom}
\widetilde{\omega}\,\in\, C^\infty(N;\, \text{ad}(E_K)\otimes \Omega^{0,1}_N)^G
\end{equation}
such that $\widetilde{\omega}(e_N)\,=\, \omega'$.

Consider the connection $\nabla^H$ on $E_H$ constructed in \eqref{nh}. Note that
$$
\widetilde{\nabla}^H\, :=\, \nabla^H+\widetilde{\omega}
$$
is a connection on $E^H$. Let $\widetilde{E}_H$ be the almost homomorphic principal
$H$--bundle defined by this connection $\widetilde{\nabla}^H$. Therefore,
$$
(\widetilde{E}_H\, ,E_K\, ; \rho)
$$
is an equivariant almost holomorphic hermitian principal
$H$--bundle on $N$, where $E_K$ are $\rho$ are as in \eqref{tr}. This
equivariant almost holomorphic hermitian principal $H$--bundle corresponds to the
pair $(\beta\, ,\omega)$ by the bijection in Lemma \ref{lem2}.

Let ${\mathcal K}(\widetilde{\nabla}^H)$ denote the curvature of the above connection
$\widetilde{\nabla}^H$ on $E_H$. The component of ${\mathcal K}(\widetilde{\nabla}^H)$
of Hodge type $(0\, ,2)$ will be denoted by ${\mathcal K}(\widetilde{\nabla}^H)^{0,2}$. We
note that the above almost holomorphic principal $H$--bundle $\widetilde{E}_H$ is
holomorphic if and only if
\begin{equation}\label{cc}
{\mathcal K}(\widetilde{\nabla}^H)^{0,2}\,=\, 0
\end{equation}
(see \eqref{ic}).

Let $\widehat{\nabla}$ be the connection on the adjoint vector bundle
$\text{ad}(E_H)$ induced by the connection $\nabla^H$ on $E_H$ in \eqref{nh}.
Since the connection $\nabla^H$ is flat, we have
\begin{equation}\label{wsh2}
{\mathcal K}(\widetilde{\nabla}^H)^{0,2}\,=\, \widehat{\nabla}(\widetilde{\omega})^{0,2}
+ (\widetilde{\omega}\bigwedge \widetilde{\omega})^{0,2}\, ,
\end{equation}
where the superscript $(0\, ,2)$ denotes the component of Hodge type $(0\, ,2)$.

We will show that
\begin{equation}\label{wsh}
\widehat{\nabla}(\widetilde{\omega})^{0,2}\,=\, 0\, .
\end{equation}

Let $Z^*\,:=\, \text{Hom}(Z\, ,{\mathbb C}\setminus\{0\})$ be the group of characters of
the subgroup $Z$ in \eqref{Z}. Note that
$Z^*$ is isomorphic to $\mathbb Z$. Consider the action of the isotropy subgroup $K(S)$
on the fiber $\text{ad}(E_H)_{e_N}\,=\, \mathfrak h$ given by $\rho$ (the action of
$K(S)$ on $\mathfrak h$ is given by the homomorphism $\beta$). Restrict this action of
$K(S)$ to the subgroup $Z\,\subset\, K(S)$. Let
\begin{equation}\label{de}
\text{ad}(E_H)_{e_N}\,=\, \bigoplus_{\chi\in Z^*} V^\chi
\end{equation}
be the isotypical decomposition of the $Z$--module. Since $Z$ is contained in the center of
$K(S)$, the action of $K(S)$ on $\text{ad}(E_H)_{e_N}$ preserves the decomposition in
\eqref{de}.

Take any $\chi\,\in\, Z^*$. Since the subspace $V^\chi\,\subset\, \text{ad}(E_H)_{e_N}$
is preserved by the action of the isotropy subgroup $K(S)\, \subset\, G$, there is a
unique $C^\infty$ subbundle
$$
{\mathcal V}^\chi\, \subset\, \text{ad}(E_H)
$$
such that
\begin{itemize}
\item ${\mathcal V}^\chi$ is preserved by the action of $G$ on $\text{ad}(E_H)$, and

\item the fiber ${\mathcal V}^\chi_{e_N}\,=\, V^\chi$.
\end{itemize}

Since ${\mathcal V}^\chi$ is preserved by the action of $G$, it follows that
${\mathcal V}^\chi$ is preserved by the connection $\widehat{\nabla}$ (as before,
$\widehat{\nabla}$ is the connection of $\text{ad}(E_H)$ induced by the connection
$\nabla^H$ on $E_H$). Indeed, the condition that ${\mathcal V}^\chi$ is
preserved by the action of $G$ implies that ${\mathcal V}^\chi$ is identified with the
flat vector bundle $G\times^{K(S)} V^\chi$ associated to the flat principal $K(S)$--bundle
$G\,\longrightarrow\, N$ for the $K(S)$--module ${\mathcal V}^\chi_{e_N}\,=\,V^\chi$.

We recall that $\widetilde{\omega}(e_N)\,=\, \omega'\, \in\,
\text{ad}(E_H)_{e_N}\otimes\Omega^{0,1}_{N,e_N}$ is fixed by the action of the isotropy
subgroup $K(S)$. In particular, $\widetilde{\omega}(e_N)$ is fixed by the
action of $Z\,\subset\, K(S)$. The group $Z$ acts on the complex vector space
$\Omega^{0,1}_{N,e_N}\,=\, \overline{\mathfrak n}^*$ as multiplication through the
character $\chi_0$ in \eqref{chi0}. Consequently, we have
$$
\omega'\, \in\, V^{\chi^{-1}_0}\otimes \Omega^{0,1}_{N,e_N}
$$
(see \eqref{de}). This implies that
$$
\widetilde{\omega}\,\in\, C^\infty(N;\, {\mathcal V}^{\chi^{-1}_0}\otimes \Omega^{0,1}_N)\, ,
$$
because $\widetilde{\omega}$ is fixed by the action of $G$ and the subbundle
${\mathcal V}^{\chi^{-1}_0}\,\subset\, \text{ad}(E_H)$ is preserved by the action of
$G$. Therefore, we have
$$
\widehat{\nabla}(\widetilde{\omega})^{0,2}\,\in\,
C^\infty(N;\, {\mathcal V}^{\chi^{-1}_0}\otimes \Omega^{0,2}_N)^G
$$
(recall that the connection $\widehat{\nabla}$ is invariant under the action $\rho$
of $G$ on $E_H$). Therefore, the evaluation
$$
\widehat{\nabla}(\widetilde{\omega})^{0,2}(e_N)\,\in\,
{\mathcal V}^{\chi^{-1}_0}\otimes \Omega^{0,2}_{N,e_N}
$$
is fixed under the action of $K(S)$, in particular, it is fixed by the action of
the subgroup $Z$. But $Z$ acts on $\Omega^{0,2}_{N,e_N}$ as multiplication through the
character $\chi^2_0$, because it acts on $\Omega^{0,1}_{N,e_N}$ as multiplication
through the character $\chi_0$. Therefore, $Z$ acts on $V^{\chi^{-1}_0}\otimes
\Omega^{0,2}_{N,e_N}$ as multiplication through the character $\chi_0$. Since
$\chi_0$ is nontrivial, this implies that we have the space of invariants
$$
(V^{\chi^{-1}_0}\otimes \Omega^{0,2}_{N,e_N})^Z\,=\, 0\, .
$$
In particular, we have $\widehat{\nabla}(\widetilde{\omega})^{0,2}(e_N)\,=\, 0$.
Therefore, $\widehat{\nabla}(\widetilde{\omega})^{0,2}\,=\, 0$, because
it is $G$--invariant. This proves \eqref{wsh}.

In view of \eqref{wsh}, from \eqref{wsh2} we conclude that
$$
{\mathcal K}(\widetilde{\nabla}^H)^{0,2}\,=\,
(\widetilde{\omega}\bigwedge \widetilde{\omega})^{0,2}\, .
$$
Now, it is easy to see that $(\widetilde{\omega}\bigwedge \widetilde{\omega})^{0,2}(e_N)\,=\,
\varphi(\omega)$, where $\varphi$ is constructed in \eqref{f}. Therefore,
we conclude that \eqref{cc} holds if and only if $\varphi(\omega)\,=\,0$.
This completes the proof.
\end{proof}

\section{Examples}

Let $G_0$ be a simple linear algebraic group defined over
$\mathbb C$. Let
$$
P\, \subsetneq\, G_0
$$
be a proper parabolic subgroup. The unipotent radical of $P$ will be denoted
by $R_u(P)$. The quotient
$$
L'(P)\,:=\, P/R_u(P)
$$
is a connected reductive complex linear algebraic group. Fix a connected
complex reductive subgroup
$$
L(P)\,\subset\, P
$$
such that the composition
\begin{equation}\label{e-1}
L(P)\,\hookrightarrow\, P\,\longrightarrow\, P/R_u(P)\,=\, L'(P)
\end{equation}
is an isomorphism. Such a subgroup $L(P)$ is called a Levi factor of $P$
\cite[p. 184]{Hu}. We note that Levi factors of $P$ exist, and any two Levi factors
of $P$ are conjugate by an element of $R_u(P)$ \cite[p. 185, Theorem]{Hu}.

The Levi subgroup $L(P)$ has the adjoint action on $R_u(P)$. The group $P$ is
identified with the corresponding semidirect product $R_u(P)\rtimes L(P)$ by sending any
$(u\, ,g)\,\in\, R_u(P)\times L(P)$ to $ug\,\in\, P$.

In the previous notation, $N\,=\, R_u(P)$ and $S\,=\, L(P)$.

Let $R_n({\mathfrak p})$ denote the Lie algebra of the
unipotent radical $R_u(P)$. Assume that $R_n({\mathfrak p})$
is abelian. Then $P$ is a maximal proper parabolic subgroup. Therefore, the
center of $L(P)$ is isomorphic to ${\mathbb C}^*$. Hence, the center of a
maximal compact subgroup $K$ of $L(P)$ is isomorphic to ${\rm U}(1)$. The adjoint
action of the center of $K$ on $R_n({\mathfrak p})$ is multiplication through a single
nontrivial character of the center.

All maximal proper parabolic subgroups of $\text{SL}(n, {\mathbb C})$
satisfy the condition that the unipotent radical is abelian. Both
$\text{Sp}(2n, {\mathbb C})$ and $\text{SO}(n, {\mathbb C})$ and also
the exceptional groups have parabolic subgroups satisfying the
the condition that the unipotent radical is abelian.



\begin{thebibliography}{ZZZZ}

\bibitem[At]{At} M. F. Atiyah, Complex analytic connections
in fibre bundles, \textit{Trans. Amer. Math. Soc.}
\textbf{85} (1957), 181--207.

\bibitem[BT]{BT} I. Biswas and A. Teleman, Invariant connections and invariant
holomorphic bundles on homogeneous manifolds, {\it Cent. Euro. Jour. Math.}
\textbf{12} (2014), 1--13.

\bibitem[He]{He} S. Helgason, \textit{Differential Geometry, Lie
Groups, and Symmetric Spaces}, Graduate Studies in Mathematics,
34. American Mathematical Society, Providence, RI, 2001.

\bibitem[Hu]{Hu} Humphreys, J. E., \textit{Linear algebraic groups}, Graduate
Texts in Mathematics, Vol. 21, Springer-Verlag, New York, Heidelberg, Berlin, 1987.

\bibitem[Ko]{Ko} J.-L. Koszul, \textit{Lectures on fibre bundles and differential
geometry}, Tata Institute of Fundamental Research Lectures on Mathematics, No. 20,
T.I.F.R., Bombay, 1965; http://www.math.tifr.res.in/$\sim$publ/ln/tifr20.pdf.

\end{thebibliography}
\end{document}